\documentclass{amsart}
\usepackage{amsfonts,latexsym,amsthm,amssymb,amsmath,amscd,euscript,mathtools}
\usepackage{framed}
\usepackage{fullpage}
\usepackage{graphicx, xcolor}
\usepackage{gensymb}
\usepackage{mathrsfs}
\usepackage{enumerate}
\usepackage{tikz,pgfplots,tikz-cd}
\usetikzlibrary{arrows,positioning,calc,patterns}
\usepackage{amsmath,amssymb,amsfonts,amsthm}
\usepackage{listings}
\usepackage{courier}
\usepackage{enumerate}
\usepackage{mathtools}
\usepackage{hyperref}
\usepackage{tabularray}
\usepackage{color}

\allowdisplaybreaks[1]

\theoremstyle{definition}

\newtheorem{theorem}{Theorem}[section]
\newtheorem{proposition}[theorem]{Proposition}
\newtheorem{algorithm}[theorem]{Algorithm}
\newtheorem{lemma}[theorem]{Lemma}

\newtheorem{conjecture}[theorem]{Conjecture}

\newcommand{\Q}{\mathbb{Q}}

\newcommand{\Z}{\mathbb{Z}}
\newcommand{\C}{\mathbb{C}}

\renewcommand{\H}{\mathcal{H}}

\newcommand{\im}{\operatorname{im}}

\newcommand{\Gal}{\operatorname{Gal}}
\newcommand{\GL}{\operatorname{GL}}

\newcommand{\Pic}{\operatorname{Pic}}
\newcommand{\Jac}{\operatorname{Jac}}

\title{Intrinsic Subgroups and the $\ell$-adic Galois image}
\author{Jacob Greene}
\date{May 2026}

\begin{document}

\begin{abstract}
    Let $X$ be a geometrically irreducible smooth projective curve over a field $k$. Yamazaki et al.\ define a biadditive symmetric pairing $\langle -,-\rangle$ on the torsion subgroup of the Picard group $\Pic(X)$ with values in $k^\times \otimes \Q/\Z$. The \textit{intrinsic subgroup} $\Pic(X)_\mathrm{tors}^\mathrm{is}$ is the kernel of this pairing. When $X$ is an elliptic curve $E$, we can identify $E \simeq \Pic^0(E)$. We classify $E(k)_\mathrm{tors}^\mathrm{is}$ in purely algebraic terms for many elliptic curves over an arbitrary field $k$. We give a generalization of the analytic methods of Yamazaki et al.\ from $\Q$ to an arbitrary field $k \subset \C$. Lastly, for $k=\Q$, we describe an algorithm to explicitly compute $E(\Q)_\mathrm{tors}^\mathrm{is}$.
\end{abstract}

\maketitle

\section{Introduction}

Let $X$ be a geometrically irreducible smooth projective curve over a field $k$. In \cite{yyyy}, Yamazaki et al.\ construct a pairing
\[\langle-,-\rangle \colon \Pic(X)_\mathrm{tors} \times \Pic(X)_\mathrm{tors} \to k^\times \otimes \Q/\Z.\]
The pairing is symmetric and biadditive, and also satisfies the following:
\begin{enumerate}[1.]
    \item (Functoriality) Let $\phi \colon X \to Y$ be a map of schemes, $[D] \in \Pic(X)_\mathrm{tors}$ and $[E] \in \Pic(Y)_\mathrm{tors}$. Then
    \[\langle [D], \phi^\ast [E]\rangle_X = \langle \phi_\ast [D], [E]\rangle_Y.\]
    \item (Base change) If $k'/k$ is an extension of fields, $X'$ is the base change of $X$ to $k'$, and $\phi^\ast$ denotes the map induced by $k \hookrightarrow k'$, we have $\phi^\ast\langle [D],[E]\rangle = \langle \phi^\ast [D], \phi^\ast [E]\rangle'$ (where $\langle-,-\rangle'$ is the pairing on $\Pic(X')_\mathrm{tors}$).
\end{enumerate}

The kernel of the pairing is the \textit{intrinsic subgroup},
\[\Pic(X)_\mathrm{tors}^\mathrm{is} := \left\{[D] \in \Pic(X)_\mathrm{tors} : \langle [D],[E]\rangle = 0 \:\forall [E] \in \Pic(X)_\mathrm{tors} \right\}.\]
When $X$ is an elliptic curve $E$, we identify $E(k) \simeq \Pic^0(E)$, and so the pairing is on the torsion points of $E$ itself.

Yamazaki et al.\ consider the case where $E$ is an elliptic curve over $\Q$. Using an explicit analytic method, they arrive at a classification which can be summarized as follows:
\begin{theorem} \cite{yyyy} \label{thm:Q_classif}
    Let $E$ be an elliptic curve over $\Q$. Then
    \begin{enumerate}[i.]
        \item the intrinsic subgroup $E(\Q)_\mathrm{tors}^\mathrm{is} \subset E(\Q)_\mathrm{tors}$ is cyclic of order at most $5$;
        \item if $\#E(\Q)_\mathrm{tors} > 8$, then $E(\Q)_\mathrm{tors}^\mathrm{is}$ is trivial; and
        \item if $\#E(\Q)_\mathrm{tors} = 8$, then $\#E(\Q)_\mathrm{tors}^\mathrm{is} \neq 4$.
    \end{enumerate}
    The remaining isomorphism classes of $E(\Q)_\mathrm{tors}^\mathrm{is} \subset E(\Q)_\mathrm{tors}$ permitted under Mazur's theorem all appear infinitely often.
\end{theorem}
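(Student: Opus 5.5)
The plan is to reduce everything to explicit formulas in the Kubert/Tate normal form parametrizations of the torsion structures allowed by Mazur's theorem. First I would describe $\Q^\times\otimes\Q/\Z$. Since $\Q^\times=\{\pm1\}\times\bigoplus_p p^{\Z}$ and $\{\pm 1\}\otimes\Q/\Z=0$, we get $\Q^\times\otimes\Q/\Z\simeq\bigoplus_p\Q/\Z$, and an element $a\otimes\frac1m$ vanishes exactly when $\pm a\in(\Q^\times)^m$. I would then get a workable formula for the pairing on an elliptic curve. For $P$ of order $m$, choose $f_P$ with $\operatorname{div} f_P=m(P)-m(O)$. For $Q$ of order $n$, take a divisor $D_Q\sim(Q)-(O)$ with support disjoint from $\{P,O\}$. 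I expect the construction of \cite{yyyy}, via Weil reciprocity and biadditivity, to give
\[\langle P,Q\rangle=f_P(D_Q)\otimes\tfrac1m \pmod{\text{roots of unity}}.\]
Analytically, over $\C$, $f_P$ can be written as a ratio of Weierstrass $\sigma$-functions, so $f_P(D_Q)$ becomes a product of Siegel-function values; this is presumably the analytic method alluded to in the abstract. Over $\Q$ it is just an explicit element of $\Q^\times$ modulo $\pm$ and $m$-th powers.

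Second, for each group $G$ in Mazur's list I would take the universal curve $E_t$ in Tate normal form $y^2+(1-c)xy-by=x^3-bx^2$, with $b,c$ rational in $t$ and $P=(0,0)$. For the $\Z/2\times\Z/2N$ cases I would use the corresponding two-parameter or one-parameter families. In these models the functions $f_{kP}$ are explicit: they are products of the lines through multiples of $P$. Hence $\langle iP,jP\rangle$ is computable as a rational function $R_{i,j}(t)$ tensored with $\frac1m$; for instance, for $P$ of order $m$ this is essentially $\langle P,P\rangle = b^{\alpha}c^{\beta}\otimes\frac1m$. The subgroup $\langle iP\rangle$ lies in the intrinsic subgroup if and only if $\pm R_{i,j}(t)$ is an $m$-th power for all $j$, and for $2$-torsion one has to treat the other $2$-torsion points separately.

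Third comes the case analysis. For (ii) and (iii), and for cyclicity and the bound $5$ in (i), I must show that the relevant conditions $\pm R(t)=u^m$ have no rational solutions with $t$ non-degenerate. Each such condition defines a curve $C:\pm R(t)=u^m$, which I expect to have genus $\ge1$ in the excluded cases. I would determine its rational points by hand: for genus $0$ or $1$ via a small descent or rank-zero computation, and for higher genus by mapping to such a curve or by Chabauty. I expect this to be the \emph{main obstacle}. There are many cases, including orders $6,7,8,9,10,12$ and $\Z/2\times\Z/2N$, and for larger $N$ the curves may have positive rank. In that situation I would try quotient maps to rank-zero elliptic curves first.

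For non-cyclicity in the $\Z/2\times\Z/2N$ cases, I would show that two independent $2$-torsion points cannot both be intrinsic. Their pairings are given by the pairwise differences of the roots $e_i-e_j$, and the conditions then lead to a conic or genus-one curve with no useful points. For the final clause, each permitted configuration gives a curve $C$ of genus $0$ with a rational point. Parametrizing it yields infinitely many $t$, and hence infinitely many non-isomorphic $E_t$ because $j(E_t)$ is non-constant. A Hilbert-irreducibility or explicit-specialization check then ensures the intrinsic subgroup is exactly the claimed one and not larger.
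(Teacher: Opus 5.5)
The paper gives no proof of this theorem; it quotes Yamazaki et al. Your skeleton is the one the paper attributes to them: Tate normal form, $-\langle P,P\rangle=f_N(t)\otimes\frac1N$ as in Proposition~\ref{prop:fnstable}, rational points on the curves $f_N(t)=\pm s^M$, and the genus-$0$ cases for the infinitude clause. The difference is that you use Weil reciprocity where they use theta functions, and that substitution is sound. For $N=4$ (your form with $c=0$), $f_P=y^2/(x-b)$ evaluated on $(-P)-(2P)\sim(P)-(O)$ gives $-b^{-1}$, so $-\langle P,P\rangle=b\otimes\frac14$, matching the table.

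The step that fails as you describe it is non-cyclicity. Among $2$-torsion points one gets $\langle T_i,T_j\rangle=(e_j-e_i)\otimes\frac12$ and $\langle T_i,T_i\rangle=(e_i-e_j)(e_i-e_k)\otimes\frac12$. So $E[2]$ pairs trivially with itself exactly when every $e_i-e_j$ is $\pm$ a square, i.e.\ on the conic $a^2+b^2=c^2$. That conic has infinitely many rational points, and they give genuine curves. For example, $y^2=x(x-9)(x-25)$ has $E(\Q)_{\mathrm{tors}}\cong\Z/2\Z\times\Z/4\Z$ and $E[2]$ isotropic. What excludes $(0,0)$ from the intrinsic subgroup there is its pairing with the order-$4$ point $P=(45,180)$ (note $2P=(25,0)$), namely $45\otimes\frac12\neq0$. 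Differences of roots alone cannot see this.

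The missing idea is that $\langle T_i,Q\rangle=\delta_i(Q)\otimes\frac12$ for all $Q\in E(\Q)_{\mathrm{tors}}$. Here $\delta_i(Q)=x(Q)-e_i\in\Q^\times/\Q^{\times2}$ is the $2$-descent map, with the usual convention at $T_i$; this is your formula with $D_Q=(Q+R)-(R)$, using that $\delta_i$ is a homomorphism. Order the roots $e_1<e_2<e_3$. Then $x-e_1\geq0$ on $E(\R)$, so $\delta_1$ takes only positive values. Hence if $E[2]$ were intrinsic, $(\delta_1,\delta_2)$ would send $E(\Q)_{\mathrm{tors}}/2E(\Q)_{\mathrm{tors}}\cong(\Z/2\Z)^2$ into $\{1\}\times\{\pm1\}$. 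This contradicts injectivity of $2$-descent, since a torsion point in $2E(\Q)$ lies in $2E(\Q)_{\mathrm{tors}}$. Every non-cyclic subgroup of $\Z/2\Z\times\Z/2N\Z$ contains $E[2]$, so this settles cyclicity in (i) uniformly, with no curves to analyse.

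A smaller point: $\langle P,P\rangle$ is not ``essentially $b^\alpha c^\beta\otimes\frac1m$'' in general. For $N=8$, with $b=(2d-1)(d-1)$ and $c=b/d$, Proposition~\ref{prop:fnstable} gives $f_8=-(d-1)(2d-1)^2d^{-7}$. This is not of the form $\pm b^\alpha c^\beta u^8$, because $d-1$ and $2d-1$ occur in $b$ and $c$ with equal exponents but in $f_8$ with exponents $1$ and $2$. So the curves $\pm R(t)=u^m$ must be built from the full rational function.
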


In Section~\ref{sec:algorithm}, we detail a procedure which, given the Weierstrass coefficients of an elliptic curve $E/\Q$ and the coordinates of a set of generators for $E(\Q)_\mathrm{tors}$, returns the order of $E(\Q)_\mathrm{tors}^\mathrm{is}$ and a generator for $E(\Q)_\mathrm{tors}^\mathrm{is}$. An implementation of this algorithm at \cite{code} was used to compute the intrinsic subgroup orders for every elliptic curve over $\Q$ cataloged in the LMFDB \cite{lmfdb}.

In Section~\ref{sec:general_fields}, we work over a general field $k$, and show that all elliptic curves admitting cyclic isogenies of a certain type must have intrinsic subgroup of a certain size. This section effectively generalizes the results of \cite{yyyy}. However, while they use analytic methods restricted to $\Q$ and perform casework on each isomorphism class of $E(\Q)_\mathrm{tors}^\mathrm{is}$, our algebraic methods allow us to work over a general field and consider $E(\Q)_\mathrm{tors}^\mathrm{is} \cong \Z/N\Z$ for arbitrary $N$.

Let $M,N \in \Z$ with $M \mid N$. Our results in Section~\ref{sec:general_fields} give a partial solution to the moduli problem of classifying elliptic curves over a general field $k$ having torsion $\Z/N\Z$ and intrinsic subgroup $\Z/M\Z$. Using a simple algebraic method, we give a sufficient (but not necessary) condition for $E$ to have intrinsic subgroup $\Z/M\Z$ in terms of isogenies admitted by $E$. As a consequence, we find that the non-cuspidal $k$-points of a particular modular curve, which we name $X_1(N)_M^{+1}$ (following the conventions of \cite{yyyy}), correspond to pairs $(E,P)$ where $E$ is an elliptic curve and $P \in E(k)_\mathrm{tors}$ such that $\langle P \rangle \cong \Z/N\Z$ and $(N/M)\langle P,P\rangle = 0$. In the case where $E(k)_\mathrm{tors} = \langle P\rangle$, we find $E(k)_\mathrm{tors}^\mathrm{is} \cong \Z/M\Z$. However, not every curve $E$ with the desired torsion and intrinsic subgroup arises as a rational point of the modular curve $X_1(N)_M^{+1}$. In Section~\ref{sec:complex}, we generalize the methods of \cite{yyyy} to show that, at least in the case where $k$ is a subfield of $\C$, every curve $E$ with $E(k)_\mathrm{tors} \cong \Z/N\Z$ and $E(k)_\mathrm{tors}^\mathrm{is} \cong \Z/M\Z$ arises as a rational point of some twist $X_1(N)_M^\zeta$ of the curve $X_1(N)_M^{+1}$ by a root of unity $\zeta$.

\section{Cyclic isogenies and the intrinsic subgroup}\label{sec:general_fields}

Purely using the functoriality of the pairing, we are able to give a sufficient condition in terms of an isogeny admitted by a curve to describe its intrinsic subgroup structure.

\begin{lemma}\label{lem:isog_ker}
    Let $E_1$, $E_2$ be elliptic curves over a field $k$, with an isogeny $\psi \colon E_1 \to E_2$ so that $\ker \psi = E_1(k)_\mathrm{tors}$.
    Let $\hat \psi$ be the dual isogeny of $\psi$. Then $\hat \psi(E_2(k)_\mathrm{tors}) \subset E_1(k)_\mathrm{tors}^\mathrm{is}$.
\end{lemma}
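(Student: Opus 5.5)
The argument uses only the functoriality property of the pairing, applied to the isogeny $\psi$ viewed as a map of curves $E_1 \to E_2$. Fix $Q \in E_2(k)_\mathrm{tors}$ and any $P \in E_1(k)_\mathrm{tors}$. We must show $\langle \hat\psi(Q), P\rangle_{E_1} = 0$. Since the pairing is symmetric and biadditive, this shows $\hat\psi(Q)$ lies in the kernel, i.e.\ in $E_1(k)_\mathrm{tors}^\mathrm{is}$. (That $\hat\psi(Q)$ is a $k$-rational torsion point is clear, since $\hat\psi$ is defined over $k$ and preserves torsion.)

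The key step is to identify $\hat\psi$ and $\psi$ with pullback and pushforward of divisor classes under the identification $E \simeq \Pic^0(E)$, $R \mapsto [R]-[O]$.
\begin{itemize}
    \item For the pushforward, $\psi_\ast([P]-[O]) = [\psi(P)]-[O]$, so $\psi_\ast$ corresponds to $\psi$ on points.
    \item For the pullback, $\psi^\ast([Q]-[O]) = \sum_{\psi(R)=Q}[R] - \sum_{T \in \ker\psi}[T]$, counted with multiplicity in the inseparable case. Under the sum map this equals $\hat\psi(Q)$, by the standard description of the dual isogeny (Silverman III.6.1).
\end{itemize}
So, with $D = [P]-[O]$ and $E = [Q]-[O]$, functoriality gives
\[\langle P, \hat\psi(Q)\rangle_{E_1} = \langle [P]-[O], \psi^\ast([Q]-[O])\rangle_{E_1} = \langle \psi_\ast([P]-[O]), [Q]-[O]\rangle_{E_2} = \langle \psi(P), Q\rangle_{E_2}.\]

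By hypothesis $P \in E_1(k)_\mathrm{tors} = \ker\psi$, so $\psi(P) = O$. The right-hand side is then the pairing of the trivial class with $[Q]-[O]$, which is $0$ by biadditivity. Hence $\langle P, \hat\psi(Q)\rangle = 0$ for every $P \in E_1(k)_\mathrm{tors}$, and $\hat\psi(Q) \in E_1(k)_\mathrm{tors}^\mathrm{is}$.

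The main obstacle is justifying that $\psi^\ast$ on $\Pic^0$ agrees with $\hat\psi$ under the Abel–Jacobi identification. This is standard, but it needs care over a non-algebraically-closed $k$, where the points in the fiber $\psi^{-1}(Q)$ need not be rational. I would handle it by noting that both sides are $k$-rational classes and that $\Pic^0(E_1) \to \Pic^0(E_{1,\bar k})$ is injective for an elliptic curve with a rational point, so the identity may be checked over $\bar k$, where it is the classical fact. A lesser point is that the functoriality property is stated for a general map of schemes. I would apply it only to the finite morphism $\psi$, where both pushforward and pullback of divisor classes are defined and preserve torsion.
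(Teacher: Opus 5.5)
Your proposal is correct and follows the paper's proof. Both identify $\hat\psi$ with $\psi^\ast$ and $\psi$ with $\psi_\ast$ on $\Pic^0$, then apply functoriality to get $\langle P,\hat\psi(Q)\rangle = \langle \psi(P),Q\rangle = \langle 0,Q\rangle = 0$. Your extra step, checking $\psi^\ast = \hat\psi$ over $\bar k$ using injectivity of $\Pic^0(E_1)\to\Pic^0(E_{1,\bar k})$, is a reasonable way to justify what the paper simply cites from Silverman III.6.1.
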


\begin{proof}
    The dual isogeny $\hat \psi \colon E_2 \to E_1$ equals the pullback $\psi^\ast \colon \Pic^0(E_2) \to \Pic^0(E_1)$ on $E_2 = \Pic^0(E_2)$ (it may be defined this way; see \cite{silverman}, III.6.1). Also, $\psi = \psi_\ast$ on $E_1 = \Pic^0(E_1)$. Let $Q \in E_2(k)_\mathrm{tors}$. Then for every $P \in E_1(k)_\mathrm{tors}$, it follows from the functoriality of $\langle-,-\rangle$ that
    \[\langle P, \hat \psi(Q)\rangle = \langle \psi(P),Q\rangle = \langle 0,Q\rangle = 0.\]
    Thus $\hat \psi(Q) \in E_1(k)_\mathrm{tors}^\mathrm{is}$.
\end{proof}

\begin{theorem}\label{thm:isog-is}
    Let $E$ be an elliptic curve over $k$ with $E(k)_\mathrm{tors} = \langle P \rangle$, where $P$ has order $N$. Suppose $E$ has a $k$-rational cyclic subgroup $C$ of order $MN$ containing $P$. Then $[N/M]P \in E(k)_\mathrm{tors}^\mathrm{is}$ and $M \mid\#E(k)_\mathrm{tors}^\mathrm{is}$.
\end{theorem}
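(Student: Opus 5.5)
The plan is to apply Lemma~\ref{lem:isog_ker} to the isogeny whose kernel is exactly $E(k)_\mathrm{tors}$, and then to exhibit a $k$-rational torsion point on the target curve whose image under the dual isogeny is $[N/M]P$. Throughout I assume $M \mid N$, as in the setup of the introduction, so that $[N/M]P$ makes sense.

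\textbf{Step 1: the isogeny.} The subgroup $\langle P\rangle$ is $k$-rational because $P$ is a $k$-point. So there is an elliptic curve $E' = E/\langle P\rangle$ over $k$ and a separable isogeny $\psi\colon E\to E'$ defined over $k$ with $\ker\psi = \langle P\rangle = E(k)_\mathrm{tors}$. Lemma~\ref{lem:isog_ker} then gives $\hat\psi(E'(k)_\mathrm{tors})\subset E(k)_\mathrm{tors}^\mathrm{is}$. Since $\deg\psi = N$, we also have $\hat\psi\circ\psi = [N]$.

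\textbf{Step 2: the rational point on $E'$.} Choose a generator $R$ of $C$. The subgroup $\langle P\rangle$ is the unique subgroup of order $N$ in the cyclic group $C$ of order $MN$, so $\langle P\rangle = \langle [M]R\rangle$. After replacing $R$ by a suitable other generator of $C$ (adjust by a unit modulo $MN$ lifting the unit modulo $N$), we may assume $P = [M]R$.

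Set $Q = \psi(R)\in E'$. I claim $Q\in E'(k)$. Let $\sigma\in\Gal(\bar k/k)$. Since $C$ is $k$-rational, $\sigma(R) = [a]R$ for some integer $a$. Since $P$ is rational, $[a]P = \sigma(P) = P$, so $a\equiv 1 \pmod N$, and hence $a\equiv 1\pmod M$ because $M\mid N$. Then $\sigma(R)-R = [a-1]R$ lies in $\langle [M]R\rangle = \ker\psi$. Because $\psi$ is defined over $k$, this gives $\sigma(Q) = \psi(\sigma R) = \psi(R) = Q$. Finally, $Q$ is torsion since $R$ is, so $Q\in E'(k)_\mathrm{tors}$.

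\textbf{Step 3: conclusion.} We compute
\[\hat\psi(Q) = \hat\psi\psi(R) = [N]R = [N/M][M]R = [N/M]P.\]
By Step 1, $[N/M]P\in E(k)_\mathrm{tors}^\mathrm{is}$. This point has order exactly $M$ because $P$ has order $N$. Hence $E(k)_\mathrm{tors}^\mathrm{is}$ contains a subgroup of order $M$, and $M \mid \#E(k)_\mathrm{tors}^\mathrm{is}$.

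\textbf{Main obstacle.} The only delicate point is the rationality of $Q$ in Step 2. It relies on the Galois action on $C$ being by scalars $a$ with $a\equiv 1 \pmod N$, and hence modulo $M$. This is exactly where the hypotheses $M\mid N$ and $P\in C$ are used.
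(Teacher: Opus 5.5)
Your proposal is correct and follows the paper's proof essentially step for step. You take $\psi$ with kernel $\langle P\rangle = E(k)_\mathrm{tors}$, show $\psi(R)$ is $k$-rational using $\sigma(R) = [a]R$ with $a \equiv 1 \pmod N$, and apply Lemma~\ref{lem:isog_ker} to $\hat\psi(\psi(R)) = [N]R = [N/M]P$. Your extra details (lifting a unit mod $N$ to a unit mod $MN$ to arrange $[M]R = P$, phrasing rationality as $\sigma(R) - R \in \ker\psi$, and stating $M \mid N$ explicitly) only make explicit what the paper leaves implicit.
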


\begin{proof}
    Choose $Q$ with $C = \langle Q \rangle$ and $MQ = P$. Let $\psi \colon E \to E_1$ be the isogeny with kernel $\langle P\rangle$. Then $\psi(Q)$ has order $M$. Notice that for every $\sigma \in \Gal(\bar k/k)$, we have $\sigma(Q) = (dN+1)Q$ for some $d \in \Z$ (because $\langle Q \rangle$ is defined over $k$ and $P \in E(k)$). Then, recalling $\psi$ is defined over $k$ also (so $\psi^\sigma = \psi$), we have
    \[\sigma(\psi(Q)) = \psi^\sigma(\sigma(Q)) = \psi(\sigma(Q)) = (dN+1)\psi(Q) = \psi(Q).\] Thus $\psi(Q)$ is Galois-stable, hence a $k$-rational point of $E_1$. Then, by Lemma~\ref{lem:isog_ker}, \[[N/M]P = [N]Q = \hat \psi(\psi(Q)) \in E(k)_\mathrm{tors}^\mathrm{is}.\]
    The order of $[N/M]P$ is $M$, so $M \mid\#E(k)_\mathrm{tors}^\mathrm{is}$.
\end{proof}

\section{Galois images and the intrinsic subgroup}\label{sec:complex}

In this section, let $k$ be a subfield of $\C$. We can phrase the hypotheses of Theorem~\ref{thm:isog-is} as a condition on the adelic Galois image of $E$. Suppose $E$ is an elliptic curve with adelic Galois image lying inside the subgroup
\[\Gamma :=\Gamma_{1,0}(N,MN) := \Gamma_1(N) \cap \Gamma_0(MN)= \left\{T: T \equiv \begin{pmatrix} 1 + N\ast & \ast \\ 0 & \ast \end{pmatrix} \bmod MN\right\} \leq \GL_2(\widehat \Z).\]
The modular curve $X_{1,0}(N,MN) := X_\Gamma$ parameterizes triples $(E,P,C)$ where $E$ is such an elliptic curve, $P$ is a point of $E$, and $C$ is a $k$-rational cyclic subgroup of order $MN$ containing $P$. Theorem~\ref{thm:isog-is} shows that all $(E,P,C)$ appearing as points of $X_{1,0}(N,MN)$ have intrinsic subgroup containing $\langle (N/M)P\rangle$. However, the converse is not true: the curve $X_{1,0}(N,MN)$ does not parameterize all elliptic curves with the given intrinsic subgroup. In general, as we will see, we need to consider several twists of $X_{1,0}(N,MN)$. We will show the following result.

\begin{theorem}\label{thm:twists}
    Let $M \mid N$ be integers. Let $k \subseteq \C$ be a field and $E$ an elliptic curve over $k$ with a point $P \in E(k)$ of order $N$ so that $(N/M)\langle P,P\rangle = 0$. Then there is some $\zeta \in \mu(k)$ such that $(E,P)$ corresponds to a $k$-rational point of a twist $X_1(N)_M^\zeta$ of the curve $X_{1,0}(N,MN)$.
\end{theorem}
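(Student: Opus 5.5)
We need a workable description of $\langle P,P\rangle$ in terms of the Galois action, and we get it by generalizing the analytic method of \cite{yyyy}. Over $\C$ write $E(\C)=\C/\Lambda$ with $\Lambda=\Z\omega_1+\Z\omega_2$, and choose the basis so that $P=\omega_1/N$. The value of $\langle P,P\rangle$ in $k^\times\otimes\Q/\Z$ can be computed from a function $f$ with $\mathrm{div}(f)=N(P)-N(O)$. One pairs $P$ against itself after moving one copy off its support, which amounts to evaluating $f$ at a translate divisor. This produces an element of $k^\times$ modulo $N$-th powers, i.e.\ a class in $k^\times/k^{\times N}\subset k^\times\otimes\frac1N\Z/\Z$.

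We compute this value explicitly. Expressing $f$ through the Weierstrass $\sigma$-function, we expect $\langle P,P\rangle$ to be represented by an element $a\in k^\times$ such that a Kummer generator $a^{1/N}$ is obtained by evaluating a function at a point $Q$ with $NQ=P$. The canonical candidate is $Q=\omega_1/N^2$, up to an element of $E[N]$. We will then read off the Kummer character $\sigma\mapsto \sigma(a^{1/N})/a^{1/N}\in\mu_N$ in terms of the Galois action on such $Q$. Writing $\sigma(Q)=Q+x_\sigma\,\omega_1/N+y_\sigma\,\omega_2/N$, the character should be $\zeta_N^{y_\sigma}$, possibly times a correction from the cyclotomic character. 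In other words, $\langle P,P\rangle$ measures the failure of $\langle Q\rangle+E[N]$-type data to be rational, with the sign and normalization pinned down by the Weil pairing.

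Now impose the hypothesis $(N/M)\langle P,P\rangle=0$. This says $a^{N/M}$ is an $N$-th power, so $a$ is an $M$-th power up to a root of unity. Hence there are $b\in k^\times$ and $\zeta\in\mu(k)$ with $a=\zeta b^M$ modulo $N$-th powers. Unwinding this, there is a point $Q'$ with $MQ'=P$, namely $Q'=N/M$ times a suitable $N^2$-division point, such that $\sigma(Q')-Q'$ lies in $\langle P\rangle$ up to a twist by the Kummer character of $\zeta^{1/N}$. Concretely, the image of $\rho_{E,MN}$ in the basis $(Q',*)$ lies in $\Gamma_{1,0}(N,MN)$ after twisting by the cocycle $\sigma\mapsto \sigma(\zeta^{1/M})/\zeta^{1/M}$. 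When $\zeta=1$ this recovers Theorem~\ref{thm:isog-is}'s hypothesis with $C=\langle Q'\rangle$.

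Finally we define $X_1(N)_M^\zeta$ as the twist of $X_{1,0}(N,MN)$ by the cocycle valued in $\mu_M$ acting via $Q\mapsto Q+(\text{unit})P$-type automorphisms of the level structure, i.e.\ the diamond/translation automorphisms preserving $P$. Because this cocycle factors through $\Gal(k(\zeta^{1/M})/k)$, the twisted moduli interpretation is: pairs $(E,P)$ together with a cyclic $C\ni P$ of order $MN$ whose Galois conjugates satisfy $\sigma(C)=C+\chi_\zeta(\sigma)Q$. The previous step then gives the required $k$-point.

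The main obstacle is the analytic computation: identifying the Kummer class of $\langle P,P\rangle$ exactly, including the root-of-unity ambiguity coming from the choice of $\sigma$-function normalization and of the lift $Q$. This is where $\zeta$ enters, and we will handle it by following the formulas of \cite{yyyy}, replacing $\Q$-specific facts (that $\mu(\Q)=\{\pm1\}$ and real period lattices) by general Galois-equivariance.
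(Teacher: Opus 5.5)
Your route differs from the paper's: you use Kummer theory and a cocycle twist, where the paper uses an explicit model. As written, it has a hole exactly where you place the ``main obstacle''.

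\textbf{The gap.} Everything rests on the claim that the Kummer class of the pairing value is the cocycle $\sigma\mapsto e_N(\sigma Q-Q,P)^{\pm1}$ for $NQ=P$. You only say this is what you expect, leave room for an unspecified cyclotomic correction, and defer it to $\sigma$-function computations you never carry out. The analytic input the paper imports from \cite{yyyy} is a theta quotient $f_N(\tau)$, not a Galois cocycle. So the ``unwinding'' in your third paragraph currently has nothing to unwind.

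\textbf{Two further errors.}
\begin{enumerate}
\item Your last paragraph misattributes $\zeta$. It does not come from normalizing the Weierstrass $\sigma$-function. It comes from the target group: $x\otimes\frac1M=0$ in $k^\times\otimes\Q/\Z$ iff $x\in\mu(k)\,k^{\times M}$, and the map $k^\times/k^{\times N}\to k^\times\otimes\Q/\Z$ kills exactly the image of $\mu(k)$.
\item The automorphisms you twist by are wrong. The map $Q'\mapsto Q'+uP$ either destroys $MQ'=P$, or is trivial on $\langle Q'\rangle$ when $uP\in\langle NQ'\rangle$. Diamond operators move $P$. The correct structure is a $\mu_M$-action on $X_{1,0}(N,MN)$ over $X_1(N)$: $\epsilon$ sends $\langle Q'\rangle\mapsto\langle Q'+T\rangle$ with $T\in E[M]$ and $e_M(T,(N/M)P)=\epsilon$, which is well defined modulo $\langle (N/M)P\rangle$. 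This action is simply transitive on fibres over $Y_1(N)$, so the relevant twists are indexed by $H^1(k,\mu_M)=k^\times/k^{\times M}$.
\end{enumerate}

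\textbf{How to close the gap algebraically.} No analysis is needed. Grant, as you do, that the pairing is computed by evaluating $f$ on a moved divisor, where $\operatorname{div} f=N(P)-N(O)$.
\begin{enumerate}
\item Pick $g\in k(E)$ with $\operatorname{div} g=[N]^*((P)-(O))$. Then $f\circ[N]=c\,g^N$ with $c\in k^\times$.
\item Take $R=NS\in E(k)\setminus\{O,\pm P\}$, or a $k$-rational moving divisor in general. Then $x:=f(P+R)/f(R)=\bigl(g(Q+S)/g(S)\bigr)^N$.
\item Since $g(X+T)=e_N(T,P)^{\pm1}g(X)$ for $T\in E[N]$, each $\sigma$ multiplies $g(Q+S)/g(S)$ by $e_N(\sigma Q-Q,P)^{\pm1}$.
\item Raise to the power $N/M$ and put $Q'=(N/M)Q$. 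This gives the cocycle $e_M(\sigma Q'-Q',(N/M)P)^{\pm1}$, which is identically $1$ iff $\sigma Q'\in Q'+\langle NQ'\rangle$ for all $\sigma$.
\item Coboundaries are absorbed by moving $Q'$ within $Q'+E[M]$. So the hypothesis $x=\zeta y^M$ yields a $k$-point $(E,P,\langle Q'\rangle)$ of the $\zeta$-twist.
\end{enumerate}

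\textbf{Comparison with the paper.} The paper never needs a Galois interpretation. It takes the rational function $f_N(a,b)\in\Q(a,b)$ with $-\langle P,P\rangle=f_N(a,b)\otimes\frac1N$, writes $f_N(a,b)=\zeta s^M$, and reads off the $k$-point $(a,b,s)$ on the explicit model $\psi_N=0$, $f_N=\zeta s^M$. For $\zeta=1$ it identifies this model with $X_{1,0}(N,MN)$ via the $\Gamma_{1,0}(N,MN)$-modularity of $s(\tau)$. That is shorter because the analytic work is imported wholesale. Your route, completed as above, never uses $k\subseteq\C$, only $\operatorname{char}k\nmid N$ and a moving argument. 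It would therefore also bear on the conjecture in Section~\ref{sec:end}.
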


The case $k=\Q$ was considered in \cite{yyyy}, where we see that if $E$ is an elliptic curve with cyclic torsion of order $N \in \{1,\dots,10,12\}$ and intrinsic subgroup of order $M \mid N$, then $E$ lies on either the modular curve $X_1(N)_M^{+}$, which is the same as our $X_{1,0}(N,MN)$, or the curve $X_1(N)_M^{-}$, which is a twist of $X_{1,0}(N,MN)$. They give an explicit planar model of $X_1(N)_M^\varepsilon$ in the form $f_N(t) = \varepsilon s^M$, where the $f_N$ are polynomials which they explicitly compute for each $N \in \{1,\dots,10,12\}$. Many aspects of their argument are applicable to a general field $k \subset\C$. We follow their argument broadly. In the place of the curves $X_1(N)_M^{\pm}$, we construct a family of curves $\{X_1(N)_M^\zeta : \zeta \in \mu(k)\}$, by giving completely explicit models. It will be evident from these models that they are all twists of $X_1(N)_M^{+1}$.

Let $E$ be an elliptic curve with a nonzero rational torsion point $P$ of order $N$. Through an affine transformation, we can put $(E,P)$ into \textit{Tate normal form}. That is, there exists an isomorphism $\phi \colon E \overset{\sim}{\to} E^{(N)}_{a,b}$ for some $a, b \in k$, with
\[E^{(N)}_{a,b} \colon \begin{cases}
    y^2 = x^3 + ax^2 + bx, \quad b(a^2 - 4b) \neq 0 & \text{if $N = 2$;} \\
    y^2 + axy + by = x^3, \quad b(a^3 - 27b) \neq 0 & \text{if $N = 3$;} \\
    y^2 + (1 + a)xy + by = x^3 + bx^2, \quad b \neq 0 & \text{ if $N \notin \{2,3\}$.}
\end{cases}\]
and $\phi(P) = (0,0)$. In the case $N \notin \{2,3\}$, the parameters $a,b \in k$ are uniquely determined by the ordered pair $(E,P)$, so $a,b = a(E,P), b(E,P)$.

If $N = 2$, we can ensure through an affine transformation $x \mapsto \lambda x$ that either $E \simeq E_{0,b}^{(2)} : y^2 = x^3 + bx$ or $E \simeq E_{1,b}^{(2)} \colon y^2 = x^3 + x^2 + bx$, i.e., that $\psi_2(a,b) := a^2 - a = 0$. Similarly, if $N = 3$, we can ensure by sending $y \mapsto \lambda y$ that either $a = 0$ or $a = 1$, so again $\psi_3(a,b) := a^2 - a = 0$.

For each $N > 3$, we can explicitly compute the coordinates $N(0,0) = (x_N/z_N,y_N/z_N)$ where $x_N,y_N,z_N \in k[a,b]$. Dividing out factors from $z_N$, we get a polynomial $\psi_N$ such that $(0,0)$ has order exactly $N$ on the curve $E_{a,b}^{(0)}$ if and only if $\psi_N(a,b) = 0$. Furthermore every such $(a,b) \in Z(\psi_N)$ defines an elliptic curve $E_{a,b}$ where $(0,0)$ has order $N$, except when this curve would be singular, i.e., when $\Delta(a,b) = 0$.

Then for all $N \geq 2$, the quasi-affine plane curve $Z(\psi_N) \setminus Z(\Delta)$ explicitly gives a plane model of the modular curve $Y_1(N)$. Then $\C(X_1(N)) \simeq \C(a,b)$.

Now, we can also choose a point $\tau \in \H$ so that $(E,P) \cong (\C/(\Z + \tau \Z), 1/N)$. Two points $\tau, \tau'$ identify the same pair $(E,P)$ if and only if they lie in the same $\Gamma_1(N)$-orbit; this is the standard construction of $X_1(N)$ as $(\Gamma_1(N))\backslash\H$. In \cite{yyyy}, it is shown that we can write $a=a(E,P)=a(\tau)$ and $b=b(\tau)$ as functions of $\tau$ in terms of the Jacobi theta function $\vartheta_1(z) = \vartheta_1(\tau;z)$,
\[a(\tau) = \frac{\vartheta_1(1/N)^4 \vartheta_1(4/N)}{\vartheta_1(2/N)^5}, \qquad b(\tau) = \frac{\vartheta_1(1/N)^5 \vartheta_1(3/N)^3}{\vartheta_1(2/N)^8}.\]
We have $\psi_N(a(\tau),b(\tau)) = 0$ for every elliptic curve $E_\tau$, hence as functions in $\C(X_1(N))$. So we can write $\C(X_1(N)) = \C(a(\tau),b(\tau))$.

With $E,P,\tau$ as above, it is shown in \cite{yyyy} that
\[-\langle P,P\rangle = f_N(\tau) \otimes 1/N \in k^\times \otimes \Q/\Z\]
where
\[f_N(\tau) = \left(\frac{\vartheta_1(2/N)^{2}}{\vartheta_1(1/N)\vartheta_1(3/N)}\right)^N.\]
We also get $f_N \in \C(X_1(N))$, and $f$ has $\Q$-rational Fourier coefficients; thus it can be written as a rational function $f_N(\tau) = f_N(a,b) \in \Q(a,b)$.

Now let $M \mid N$ and define
\[s(\tau) = \left(\frac{\vartheta_1(2/N)^{2}}{\vartheta_1(1/N)\vartheta_1(3/N)}\right)^{N/M}.\]
Now $s(\tau)$ is not in general a $\Gamma_1(N)$-modular function, but in \cite{yyyy}, it is checked that $s$ is $\Gamma_{1,0}(N,MN)$-modular. So $s \in \C(X_{1,0}(N,MN))$. Then $\C(s,a,b) \subseteq \C(X_{1,0}(N,MN))$. Also $s^M \in \C(X_1(N))$, and so $[\C(s,a,b):\C(a,b)] = M$. But the map $X_{1,0}(N,MN) \to X_1(N)$ has degree $M$. Thus $\C(s,a,b) = \C(X_{1,0}(N,MN))$, and the two equations $z_N(a,b) = 0$, $f_N(a,b) = s^M$ define $X_{1,0}(N,MN)$ over $k$.

Now we can prove Theorem~\ref{thm:twists}.
\begin{proof}[Proof (Theorem~\ref{thm:twists}).]
    Suppose that $E$ has the point $P$ of order $N$ such that $[N/M]\langle P,P\rangle = 0$. Choose $\tau$ with $(\C/(\Z + \tau \Z),1/N) \simeq (E,P)$. Write $a:=a(\tau)$, $b:=b(\tau)$. Then we have $f_N(a,b) \otimes 1/M = 0$, i.e., $f_N(a,b) = \zeta s^M$ for some root of unity $\zeta$ and $s \in k$. Define the curve $X_1(N)_M^\zeta$ explicitly by the equations $\psi_N(a,b) = 0$ and $f_N(a,b) = \zeta s^M$; we see that $(E,P)$ corresponds to a $k$-point of $X_1(N)_M^\zeta$. Furthermore, $X_1(N)_M^{+1} \cong X_{1,0}(N,MN)$, and each $X_1(N)_M^\zeta$ is a twist of $X_1(N)_M^{+1}$. 
\end{proof}

Notice that not \textit{every} elliptic curve parameterized by the curve $X_1(N)_M^\zeta$ has $E(k)_\mathrm{tors}^\mathrm{is} \simeq \Z/M\Z$; namely, if $E(k)_\mathrm{tors}$ is strictly larger than $\langle P\rangle$, then $E(k)_\mathrm{tors}^\mathrm{is}$ may be smaller than $\langle (N/M)P\rangle$.

Write $\Gamma := \Gamma_0(MN) \cap \Gamma_1(N)$. Another way to view this result is a condition on the Galois image of $E$. We showed in Section~\ref{sec:general_fields} that when the Galois image of $E$ is $\Gamma$, then $E(k)_\mathrm{tors}^\mathrm{is} \simeq \Z/M\Z$; we can now see that the converse is true up to conjugacy.

\begin{theorem}
    Let $M, N \in \Z$ and $M \mid N$, and suppose $k \subseteq \C$. Let $E$ be an elliptic curve over $k$, and suppose $E(k)_\mathrm{tors} \simeq \Z/N\Z$ and $E(k)_\mathrm{tors}^\mathrm{is} \simeq \Z/M\Z$. Write $H \leq \GL_2(\widehat \Z)$ for the Galois image of $E$. Then $H$ is conjugate to a subgroup of $\Gamma := \Gamma_{1,0}(N,MN)$.
\end{theorem}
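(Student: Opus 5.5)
The plan is to reduce the statement to Theorem~\ref{thm:isog-is} read backwards, with Theorem~\ref{thm:twists} as the bridge. Let $P$ generate $E(k)_\mathrm{tors}\cong\Z/N\Z$. Since the intrinsic subgroup is $\Z/M\Z$, it equals $\langle (N/M)P\rangle$, so $(N/M)\langle P,P\rangle=0$. Theorem~\ref{thm:twists} then gives some $\zeta\in\mu(k)$ and some $s\in k$ such that $(E,P)$ is a $k$-point of $X_1(N)_M^\zeta$, that is, $f_N(a,b)=\zeta s^M$.

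The key observation I would use is that the choice of $\zeta$ is not intrinsic to $E$. The $M$ points of $X_{1,0}(N,MN)$ lying over $(E,P)\in X_1(N)$ correspond to the cyclic subgroups $C\supset\langle P\rangle$ of order $MN$. Equivalently, they correspond to the $M$ values $s(\gamma\tau)$ for $\gamma\in\Gamma_1(N)/\Gamma$. Since $s^M=f_N$ is $\Gamma_1(N)$-invariant, these values differ by $M$-th roots of unity. The Galois action on this fiber is computed from $H$ via Shimura reciprocity: $\sigma\in\Gal(\bar k/k)$ with image $g_\sigma\in H$ sends $s(\tau)$ to $s^{g_\sigma}(\tau)$. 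The proof therefore comes down to showing that one of the $M$ values $s(\gamma\tau)$ is $k$-rational after a suitable change of level structure. Such a $k$-rational value is exactly a $k$-rational $C$, and then $H$ lies in a conjugate of $\Gamma$.

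To absorb the twist $\zeta$, I would replace the identification $\tau$ by $g\tau$, where $g$ runs over elements of $\GL_2(\widehat\Z)$ normalizing $\Gamma_1(N)$ and $\Gamma$. The main candidates are $g=\begin{pmatrix}1&0\\0&u\end{pmatrix}$ with $u\in\widehat\Z^\times$, together with the upper-triangular unipotents of level $MN$. Conjugating $H$ by $g$ multiplies $s$ by the corresponding root of unity, because the Fourier coefficients of $s$ are transformed through the cyclotomic character. Concretely, I would compute the action of $g$ on $s$ from the $q$-expansion of $\vartheta_1(j/N)$, which brings in $e^{2\pi i j/N}$. This expresses the effect of the twist as $s\mapsto \xi s$, where $\xi$ ranges over a subgroup of $\mu_{MN}$. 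The relation $f_N=\zeta s^M$ then becomes $f_N=(\zeta\xi^M)s'^M$ with $s'=\xi^{-1}s$.

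The main obstacle is the surjectivity step. One must show that every $\zeta\in\mu(k)$ is congruent, modulo $(k^\times)^M$, to some $\xi^{M}$ realized by such a conjugation. Then $s'$ is $k$-rational, and $(E,P,C)$ is a $k$-point of $X_1(N)_M^{+1}=X_{1,0}(N,MN)$ for the conjugated structure. I would first attack this with the fact that $\zeta\in k$ forces $\det H$ to fix $\zeta$: the determinant is the cyclotomic character, so the Kummer character $\sigma\mapsto\sigma(\zeta^{1/M})/\zeta^{1/M}$ factors through $\det H$. The goal would be to match this character with the one produced by $g$. If this matching fails for some $\zeta$, the argument only gives containment up to this character twist, and that case would need a separate treatment.
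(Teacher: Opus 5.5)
Your reduction is sound up to the step you flag yourself, but that surjectivity step is false, so the gap cannot be closed. Indeed the statement fails whenever $\zeta\notin(k^\times)^M$.

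First, the elements you propose to conjugate by, $\begin{pmatrix}1&0\\0&u\end{pmatrix}$ and the upper-triangular unipotents, all normalize $\Gamma$. For $g$ in the normalizer, $gHg^{-1}\le\Gamma$ iff $H\le\Gamma$, so these conjugations can never move $H$ into $\Gamma$.

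More intrinsically, $f_N(a,b)\in k^\times$ is determined by $(E,P)$ through the Tate normal form. In the paper's model, the fiber of $X_{1,0}(N,MN)\to X_1(N)$ over $(E,P)$ is $\{s : s^M=f_N(a,b)\}$. Hence a $k$-rational cyclic $C\supseteq\langle P\rangle$ of order $MN$ exists iff $f_N(a,b)\in(k^\times)^M$, i.e.\ iff $\zeta\in(k^\times)^M$. Rescaling $s'=\xi^{-1}s$ keeps $s'\in k$ only if $\xi\in k$, and then $\zeta\xi^M\equiv\zeta \bmod (k^\times)^M$, so the class of $\zeta$ cannot be absorbed. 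Any basis putting $H$ inside $\Gamma$ yields such a $C$ containing $E(k)_\mathrm{tors}=\langle P\rangle$. So $H$ is conjugate into $\Gamma$ exactly when that class is trivial.

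Here is a concrete counterexample. Take $k=\Q$ and $E\colon y^2+xy-y=x^3-x^2$, i.e.\ the paper's normal form with $a=0$, $b=-1$ (discriminant $17$; Cremona 17a4).
\begin{enumerate}[1.]
\item Since $\#E(\mathbb F_3)=4$, we get $E(\Q)_\mathrm{tors}=\langle(0,0)\rangle\cong\Z/4\Z$.
\item Since $f_4=b=-1$ is a root of unity, $\langle P,P\rangle=0$, so $N=M=4$.
\item Halving $P$ shows that a rational cyclic subgroup of order $8$ containing $P$ exists iff $b\in(\Q^\times)^2$. This is just the paper's identification $X_1(4)_2^{+1}=X_{1,0}(4,8)$.
\item So $E$ has no rational cyclic subgroup of order $16$ containing $P$, and $H$ is not conjugate into $\Gamma_{1,0}(4,16)$.
\end{enumerate}

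The paper's two-line proof hides the same problem. It asserts that every twist of $X_\Gamma$ is some $X_{H'}$ with $H'$ conjugate to $\Gamma$. In fact $X_1(N)_M^\zeta$ corresponds to a group with the same intersection with $\mathrm{SL}_2(\widehat\Z)$ as $\Gamma$, twisted through $\det$ by the Kummer character of $\zeta^{1/M}$. That group is not conjugate to $\Gamma$ when $\zeta\notin(k^\times)^M$. Your fallback, containment of $H$ in that twisted group, is the correct conclusion. The theorem as stated holds only when $\zeta\in(k^\times)^M$, for instance $k=\Q$ with $M$ odd.
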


\begin{proof}
    Twists of $X_\Gamma$ are modular curves of the form $X_H$, where $H$ is conjugate to $\Gamma$. By Theorem~\ref{thm:twists}, $E$ lies on a twist of $X_\Gamma$.
\end{proof}

\section{Computing Intrinsic Subgroups Over $\Q$.}\label{sec:algorithm}

In this section, we outline an algorithm to give $E(\Q)_\mathrm{tors}^\mathrm{is}$, in the case where $E(\Q)_\mathrm{tors}$ is cyclic of order at least $4$. Along with Theorem~\ref{thm:Q_classif}, which gives the classification of possibilities for $E(\Q)_\mathrm{tors}^\mathrm{is}$, we use the following result, which is involved in the proof of that classification:

\begin{proposition} \cite{yyyy} \label{prop:fnstable}
    \item Let the elliptic curve $E/\Q$ be given by $E \colon y^2 + (1+a)xy + by = x^3 + bx^2$. If the point $P=(0,0)$ has order $N \geq 4$, then the pairing $-\langle P,P\rangle = f_N(t(a,b)) \otimes 1/N$, where the rational functions $f_N$, $t$ are given by the following table:
    \begin{center}
        \begin{tabular}{r|l|l}
            $N$ & $f_N(t)$ & $t(a,b)$ \\ \hline
            $4$ & $t$ & $b$ \\
            $5$ & $t$ & $b$ \\
            $6$ & $t(1-t)^2$ & $1-b/a$ \\
            $7$ & $t(1-t)^4$ & $1-b/a$ \\
            $8$ & $t(1-t)^2(1+t)^4$ & $a/b-1$ \\
            $9$ & $t(1-t)^4(1-t+t^2)^3$ & $(a-a^2-b)/(a-b)$ \\
            $10$ & $t(1-t)^2(1+t)^8(1+t-t^2)^5$ & $(a-a^2-b)/a^2$ \\
            $12$ & $t(1-t)^2(1-t+t^2)^3(1+t^2)^4(1+t)^6$ & $-(a^3-ab+b^2)/(a-b)^2$
        \end{tabular}
    \end{center}
\end{proposition}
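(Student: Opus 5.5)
This proposition is quoted from \cite{yyyy}. To re-derive it, I would start from the analytic formula already recalled in Section~\ref{sec:complex}: for $(E,P)\cong(\C/(\Z+\tau\Z),1/N)$ we have
\[-\langle P,P\rangle = f_N(\tau)\otimes 1/N,\qquad f_N(\tau)=\left(\frac{\vartheta_1(2/N)^2}{\vartheta_1(1/N)\vartheta_1(3/N)}\right)^N .\]
Since $f_N$ is $\Gamma_1(N)$-modular with rational Fourier coefficients, it is a rational function on $X_1(N)$ defined over $\Q$. The content of the proposition is therefore an identification of $f_N$ with an explicit rational function in $a,b$. That identification can be made purely as an identity in $\C(X_1(N))$, and afterwards specialized at any $\Q$-point of $Y_1(N)$ where both sides are defined and nonzero.

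For each $N\in\{4,\dots,10,12\}$, $X_1(N)$ has genus $0$, so $\Q(X_1(N))=\Q(t)$ for a single Hauptmodul. I would take $t=t(a,b)$ as in the table and check that it is a Hauptmodul, using the standard Kubert parametrizations of Tate normal form. For example, for $N=6$ one has $a=\ldots$, $b=\ldots$ rational in $t$, and inverting gives $t=1-b/a$. The key point is that once $a(\tau)$ and $b(\tau)$ are written via the theta quotients from Section~\ref{sec:complex}, $t(\tau)$ is itself an explicit eta/theta quotient. Then both $f_N(\tau)$ and the candidate $f_N(t)$ are rational functions on $X_1(N)$, and the task is to show they agree, or agree up to a constant that is an $N$-th power.

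I would compare divisors: compute the orders of vanishing of the theta quotients $\vartheta_1(j/N)$ at every cusp of $X_1(N)$, using the standard $q$-expansion orders of generalized Dedekind eta/Klein forms at cusps. This gives $\operatorname{div}(f_N)$, and it should match $\operatorname{div}$ of the tabulated polynomial in $t$, whose factors $t$, $1\pm t$, $1-t+t^2$, and so on vanish exactly at the cusps. Equal divisors then force equality up to a constant, and the constant is fixed by comparing leading $q$-expansion coefficients at $\infty$. Strictly, the pairing only requires agreement modulo $(\Q^\times)^N$, but I expect the constant to be $1$.

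The main obstacle is the cusp-by-cusp bookkeeping. $X_1(N)$ has many cusps, some irrational for composite $N$ such as $12$, and it is easy to misassign which factor of $f_N(t)$ vanishes at which cusp. As a fallback I would compare $q$-expansions directly. Both sides lie in the finite-dimensional space of functions with poles bounded by a known divisor, so agreement of enough initial $q$-coefficients (Sturm-type bound) proves the identity, and that check is a finite computation.
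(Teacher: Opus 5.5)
The paper gives no proof of this proposition; it is quoted from \cite{yyyy}. Your argument therefore has to stand on its own, and as planned it fails at the central identification. With the theta expressions recalled in Section~\ref{sec:complex}, $f_N(\tau)$ is not a constant multiple of the tabulated $f_N(t)$: it is its reciprocal.

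For $N=4$ this is immediate. You have $a(\tau)=0$ because $\vartheta_1(1)=0$, and since $\vartheta_1(1-z)=\vartheta_1(z)$,
\[ t=b(\tau)=\frac{\vartheta_1(1/4)^8}{\vartheta_1(1/2)^8},\qquad f_4(\tau)=\frac{\vartheta_1(1/2)^8}{\vartheta_1(1/4)^8}=t^{-1}. \]
The same happens in the next cases:
\begin{enumerate}[1.]
\item For $N=5$, $a=b=\vartheta_1(1/5)^5/\vartheta_1(2/5)^5=f_5(\tau)^{-1}$.
\item For $N=6$, the relation $b=a-a^2$ on $Y_1(6)$ gives $t=a$ and $t(1-t)^2=b^2/a=f_6(\tau)^{-1}$.
\item For $N=7$, one finds $t=a^2/b$ and $t(1-t)^4=b^3/a^2=f_7(\tau)^{-1}$.
\end{enumerate}
So your cusp-by-cusp comparison would return $\operatorname{div} f_N(\tau)=-\operatorname{div} f_N(t)$. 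The constant $1$ you expect is really the constant in $f_N(\tau)\,f_N(t(\tau))=1$.

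Feeding this into the Section~\ref{sec:complex} formula $-\langle P,P\rangle=f_N(\tau)\otimes\frac1N$ yields $\langle P,P\rangle=f_N(t)\otimes\frac1N$, which is the negative of what the proposition claims. Your slack ``modulo $\pm(\Q^\times)^N$'' does not absorb this. Indeed $z\otimes\frac1N=z^{-1}\otimes\frac1N$ only if $z^2\in\pm(\Q^\times)^N$, which fails for example when $N=5$ and $b=2$.

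In other words, the two normalizations quoted in the paper are off by a sign. These are the display in Section~\ref{sec:complex} on one hand, and this table together with the remark that $f_N(t(a,b))$ ``is the same as'' $f_N(a,b)$ on the other. The proposition therefore cannot be deduced from the former. The missing ingredient is a sign determination from the actual definition of the pairing in \cite{yyyy}.

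For instance, suppose the pairing is normalized as $\langle [D],[E]\rangle=g_D(E)\otimes\frac1n$ with $\operatorname{div} g_D=nD$. Take $g(z)=\bigl(\vartheta_1(z-\tfrac1N)/\vartheta_1(z)\bigr)^N$, which is elliptic with divisor $N(P)-N(O)$. Evaluate it on the $k$-rational divisor $(3P)-(2P)$, which is linearly equivalent to $(P)-(O)$ and disjoint from $\{P,O\}$ for $N\ge 4$. You get $\langle P,P\rangle=\bigl(g(3/N)/g(2/N)\bigr)\otimes\frac1N=f_N(\tau)\otimes\frac1N$. Combined with $f_N(\tau)=f_N(t)^{-1}$, this gives exactly the proposition.

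So the right target is the identity $f_N(\tau)\,f_N(t(\tau))=1$, plus an honest sign computation from the definition. The sign does not affect kernels, so the intrinsic subgroup and the algorithm are unaffected.

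A smaller point: with the normalization $P\leftrightarrow 1/N$, the $q$-expansions at $\infty$ are not rational. For example, $f_5(\tau)\to(2\cos\frac{\pi}{5})^5\notin\Q$ as $\tau\to i\infty$. Rationality over $\Q$ should therefore come from the identity with $f_N(t)\in\Q(t)$, not from ``rational Fourier coefficients'' at $\infty$.
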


The function $f_N(t(a,b))$ is the same as the function $f_N(a,b)$ which we defined in a more general context in Section~\ref{sec:complex}. The existence of the single parameter $t$ is due to the fact that all of the curves we are working with have genus $0$.

The proposition yields a straightforward procedure to compute $\langle P,P\rangle$ given $(E,P)$, and thereby to compute $E(\Q)_\mathrm{tors}^\mathrm{is}$. 

\begin{algorithm} Given an elliptic curve $E/\Q$ with Weierstrass coefficients $a_1,a_2,a_3,a_4,a_6$ and a generator $P=(x_0,y_0)$ for $E(\Q)_\mathrm{tors}$, assuming $P$ has order at least $4$, compute $(M,Q)$, where $E(\Q)_\mathrm{tors}^\mathrm{is} = \langle Q\rangle$ has order $M$.
\begin{enumerate}[1.]
    \item Find the order $N$ of $P$.
    \item Put $E$ into Tate normal form through the following transformations:
    \begin{enumerate}[i.]
        \item Send $(x,y) \mapsto (x-x_0,y-y_0)$, to set $a_6 = 0$;
        \item Send $(x,y) \mapsto (x,y+\frac {a_4}{a_3} x)$, to put $E$ into the form $E \colon y^2 + b_1 xy + b_3 y = x^3 + b_2x^2$.
        \item Send $(x,y) \mapsto (\lambda^2 x, \lambda^3 y)$ where $\lambda = b_3/b_2$, to put $E$ into the form $E \colon y^2 + (1+a)xy + by = x^3 + bx^2$. 
    \end{enumerate}
    Note that $a_3 = 0$ iff $P$ is $2$-torsion and $b_2 = 0$ iff $P$ is $3$-torsion. So by our assumption on $P$, we may divide by them.
    \item Using the functions in the table from Proposition~\ref{prop:fnstable}, compute $f_N(t(a,b))$, with $a$ and $b$ the Tate normal form coefficients.
    \item Find the highest exponent $e$ for which $|f_N(t(a,b))| = r^e$ for some $r \in \Q$, or set $e = 0$ if $f_N(t(a,b)) = 1$.
    \item Let $M = \gcd\{e,N\}$ and $Q = (N/M)P$. Return $(M,Q)$.
\end{enumerate}
\end{algorithm}

To see that the output is correct, notice that
\[Q \in E(\Q)_\mathrm{tors}^\mathrm{is} \iff \langle P,Q \rangle = 0 \iff \exists \mu \in \mu(\Q) = \{\pm 1\}, s \in \Q,\: f_N(t(a,b)) = \mu s^M.\]
A similar, but more involved, procedure allows us to compute the intrinsic subgroup in the cases $N = 2,3$, and when $E(\Q)_\mathrm{tors}$ is not cyclic. The full algorithm is implemented in \cite{code}.

\section{Discussion}\label{sec:end}

We found that the curve $X := X_{1,0}(N,MN)$ gives a family of curves having (with a few exceptions) $E(k)_\mathrm{tors} \simeq \Z/N\Z$ and $E(k)_\mathrm{tors}^\mathrm{is} \simeq \Z/M\Z$. We also showed that, when $k \subset \C$, every such curve lies on one of several twists of $X$. It seems natural to expect that the same curves also solve the moduli problem over a general field; however, our methods are not sufficient to show this. That is, the following seems likely (at least if $\operatorname{char}(k) \nmid N$):

\begin{conjecture}
    Let $k$ be any field and $E$ an elliptic curve over $k$ with $E(k)_\mathrm{tors} = \langle P\rangle \simeq \Z/N\Z$. Then $(N/M)P \in E(k)_\mathrm{tors}^\mathrm{is}$ iff $\im \rho_E \leq H$, where $X_H$ is one of the twists of $X_{1,0}(N,MN)$ appearing in Theorem~\ref{thm:twists}. That is, the moduli problem of elliptic curves having torsion $\Z/N\Z$ and intrinsic subgroup $\Z/M\Z$ is solved over a general field by the curves $X_1(N)_M^\zeta$.
\end{conjecture}

For simplicity, we restrict our attention in Section~\ref{sec:general_fields} to elliptic curves. However, it seems likely that our results can be phrased for a general curve $X$, in terms of isogenies admitted by the Jacobian $J=\Jac(X)$. In particular, Lemma~\ref{lem:isog_ker} could be proven similarly. This is exactly what we showed for subfields of $\C$ in Section~\ref{sec:complex}.


Our algorithm in Section~\ref{sec:algorithm} uses very little that is specific to $\Q$; most of the steps would work just as well using the rational functions $f_N(a,b)$, which have the necessary properties over any field. The largest source of additional complexity is that if we vary the field, $N$ is no longer uniformly bounded; we can no longer precompute the $f_N$. The functions $f_N$ can still be computed using their Fourier expansions, but the degree grows linearly with $N$. The other step which does not generalize immediately is identity testing in $k^\times \otimes \Q/\Z$: given $z \in k^\times$ and $n \in \mathbb \Z_{>0}$, we must determine whether $z \otimes \frac 1n = 1$ in $k^\times \otimes \Q/\Z$, or equivalently whether $z \in \mu(k) \cdot (k^\times)^n$. This can be done quickly for many classes of fields, including global fields and local fields; however, for other $k$ it may not be easy.


Throughout, we focused on the case with $E(k)_\mathrm{tors}$ cyclic. The analytic approach used in Section~\ref{sec:complex} and in \cite{yyyy} does not immediately generalize to the noncyclic case. However, it seems reasonable to expect that there is still a relationship between $E(k)_\mathrm{tors}^\mathrm{is}$ and the Galois image of $E$, even when $E(k)_\mathrm{tors}$ is noncyclic. What is this relationship? Are there cases in which $E(k)_\mathrm{tors}$ may not be cyclic, but $E(k)_\mathrm{tors}^\mathrm{is}$ admits a description purely in terms of the isogenies of $E$, analogous to Theorem~\ref{thm:isog-is}?

Some other basic questions also remain unanswered. In general, given a field $k$, which groups can appear as $E(k)_\mathrm{tors}^\mathrm{is}$? Is it true that, given any $G = \Z/N_1\Z \oplus \Z/N_2\Z$ and $H \leq G$, there exists a field $k$ and an elliptic curve $E/k$ with $E(k)_\mathrm{tors} \simeq G$ and $E(k)_\mathrm{tors}^\mathrm{is} \simeq H$? When there exists such a curve, are there infinitely many such curves? For example, in the case $G = \Z/N\Z$ and $H = \Z/M\Z$, Faltings' theorem implies that, whenever the genus of $X_{1,0}(N,MN)$ is at least $2$, there are finitely many such curves over any fixed number field.


\section*{Acknowldgements}

The author thanks Prof. Andrew Sutherland for his continuous support in this exploration.

\end{document}